\documentclass[11pt]{article}
\usepackage{iftex}
\ifPDFTeX
  \usepackage[T1]{fontenc}
  \usepackage{lmodern}
\else
  \usepackage{fontspec}
\fi
\usepackage[margin=1in]{geometry}
\usepackage{amsmath,amssymb,amsthm,array,booktabs}
\usepackage[explicit]{titlesec}

\newtheorem{thm}{Theorem}
\newtheorem{lem}[thm]{Lemma}
\newtheorem{cor}[thm]{Corollary}

\titleformat{\subsection}[runin]
  {\normalfont\large\bfseries}
  {\thesubsection}{1em}{#1.}
\titlespacing*{\subsection}{0pt}
  {3.25ex plus 1ex minus .2ex}{.6em}

\title{Equivalence of Restricted and Full Division}
\author{Naoya~Kato}
\date{}

\begin{document}
\maketitle

\begin{abstract}
We prove over $\mathrm{PA}^{-}$ that the restricted Euclidean division
principle $\mathrm{pa17}^{-}$, which asserts division only for co-prime
pairs, is equivalent to the full principle $\mathrm{pa17}$. The proof uses
a transformation that preserves the quotient in division with remainder to
reduce an arbitrary division problem to one for a co-prime pair. This
answers Visser's Open Question~8.29 and shows that the restricted
principle has the same strength as full Euclidean division over
$\mathrm{PA}^{-}$.
\end{abstract}

\section{Introduction}

Visser~\cite[Open Question~8.29]{visser} asked whether, over
$\mathrm{PA}^{-}$, there exists a countermodel showing that the division
principle $\mathrm{pa17}^{-}$ for co-prime pairs does not imply the full
Euclidean Division Principle $\mathrm{pa17}$. In this paper, we prove
\[
\mathrm{PA}^{-}\vdash
\bigl(\mathrm{pa17}^{-}\leftrightarrow\mathrm{pa17}\bigr).
\]
Thus no such countermodel exists. For general background on weak arithmetic
and its model theory, see Hájek--Pudlák~\cite{hajek-pudlak} and
Kaye~\cite{kaye}.

From the reverse-mathematical perspective, the equivalence identifies the
restricted and full division principles over the same base theory. Thus any
reverse-mathematical strength result formulated using either principle
transfers immediately to the other: restricting division to co-prime pairs
does not give a weaker principle over $\mathrm{PA}^{-}$.

The proof centers on the invariance
\[
\operatorname{euc}(x,y)
\leftrightarrow
\operatorname{euc}(tx,ty+s),
\]
which holds for $x\ne0$ and $0\le s<t$. This transformation leaves the
quotient unchanged and sends a remainder $r$ to $tr+s$. Taking $t=x$ and
$s=1$, and putting $z+1=xy$, we have
\[
x^2y^2=(xy+1)z+1,
\]
so $(x^2,xy+1)$ is a co-prime pair. Thus division for co-prime pairs of this
special form yields a quotient and remainder for the division of $y$ by $x$.

\section{The Base Theory and the Division Principles}

\subsection{Language and axioms}

Let the base language be
\[
\mathcal L_0=\{0,1,+,\cdot,\le\}.
\]
We use $x<y$ as an abbreviation for $x\le y\land x\ne y$, and put
$2:=1+1$ and $x^2:=x\cdot x$. Following the numbering of the axioms and the
definition of $\mathrm{PA}^{-}$ in Visser~\cite[Section~3.1]{visser},
$\mathrm{PA}^{-}$ consists of pa1--pa13 and pa15 below. All free variables in
the axioms are understood to be universally quantified.
\begin{align*}
\mathrm{pa1}:\quad &x+0=x,\\
\mathrm{pa2}:\quad &x+y=y+x,\\
\mathrm{pa3}:\quad &(x+y)+z=x+(y+z),\\
\mathrm{pa4}:\quad &x\cdot1=x,\\
\mathrm{pa5}:\quad &x\cdot y=y\cdot x,\\
\mathrm{pa6}:\quad &(x\cdot y)\cdot z=x\cdot(y\cdot z),\\
\mathrm{pa7}:\quad &x\cdot(y+z)=x\cdot y+x\cdot z,\\
\mathrm{pa8}:\quad &x\le y\lor y\le x,\\
\mathrm{pa9}:\quad &(x\le y\land y\le z)\rightarrow x\le z,\\
\mathrm{pa10}:\quad &x+1\not\le x,\\
\mathrm{pa11}:\quad &y\le x\rightarrow(y=x\lor y+1\le x),\\
\mathrm{pa12}:\quad &y\le x\rightarrow y+z\le x+z,\\
\mathrm{pa13}:\quad &y\le x\rightarrow y\cdot z\le x\cdot z,\\
\mathrm{pa15}:\quad &y\le x\rightarrow\exists z\,(x=y+z).
\end{align*}

\subsection{Basic properties used in the proof}

As standard elementary consequences of pa1--pa13 and pa15, we shall use the
following properties.

\begin{lem}\label{lem:basic}
In $\mathrm{PA}^{-}$, $\le$ is a linear order, and the following hold.
\begin{enumerate}
\item If $a+c\le b+c$, then $a\le b$. In particular, if
      $a+c=b+c$, then $a=b$.
\item $a\cdot0=0$, $0\le a$, and $a\ne0\leftrightarrow0<a$.
\item $a<b\leftrightarrow a+1\le b$. In particular, if $a>0$, then
      $1\le a$.
\item If $a\le b$, then there exists $d$ such that $b=a+d$.
      If $a<b$, then this $d$ is positive.
\end{enumerate}
\end{lem}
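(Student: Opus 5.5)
I would prove the items in an order where each uses only the axioms and the items already established. The linear-order claim comes first. Totality is pa8 and transitivity is pa9. For reflexivity, pa8 with $y=x$ gives $x\le x$ directly. Antisymmetry I would postpone until subtraction (pa15) and item~1 are available.

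Next, item~4 is pa15 itself, apart from positivity. If $a<b$ and $b=a+d$ with $d=0$, then pa1 gives $b=a$, contradicting $a\ne b$; so $d\ne0$, and "positive" will mean $0<d$ once item~2 is established.

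For item~1, suppose $a+c\le b+c$ but $a\not\le b$. By pa8 we get $b\le a$ and $b\ne a$. By pa11, $b+1\le a$, and pa12 with pa2/pa3 gives $b+c+1\le a+c\le b+c$, contradicting pa10 via pa9. The "in particular" case follows once antisymmetry is known: $a+c=b+c$ gives $a\le b$ and $b\le a$.

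Antisymmetry itself I would prove as follows. Assume $a\le b$, $b\le a$, $a\ne b$. Then pa11 gives $a+1\le b\le a$, contradicting pa10 through pa9. So antisymmetry is immediate from pa11 and needs no cancellation, and I would place it first after reflexivity.

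For item~2, I would start with $0\cdot a=0$. Distributivity gives $a\cdot0=a\cdot(0+0)=a\cdot0+a\cdot0$, so $0+a\cdot0=a\cdot0+a\cdot0$, and cancellation (item~1) yields $a\cdot0=0$.

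For $0\le a$: if not, then $a\le 0$ and $a\ne0$ by pa8. By pa15, $0=a+d$ for some $d$. I would then show $0\le d$ fails similarly, which is circular, so a different route is needed. I would multiply: pa13 applied to $a\le0$ with $z=a$ is unhelpful, so I would use this instead. From $a\le 0$, pa12 with $z=d$ gives $a+d\le d$, i.e.\ $0\le d$. Then pa12 with $z=a$ gives $a\le d+a=0$, which holds anyway, so this does not close the argument.

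The better route is pa13 with $z=0$... I expect this step, $0\le a$, to be the main obstacle. My concrete attempt is to derive a contradiction from $0=a+d$ with $a<0$:
\begin{itemize}
\item From $a<0$, pa11 gives $a+1\le 0$.
\item Adding $d$ via pa12 gives $1\le d$, and hence $0<d$, since $0+1\le d$ and $0\ne d$ by pa10.
\item Then $0\le d$, and pa13 multiplying $a\le0$ by $d$ gives $ad\le 0$. Distributivity gives $0=0\cdot d=(a+d)d=ad+d^2$.
\end{itemize}
Combining these with cancellation and pa12 should give $d^2\le 0$ contrary to $d\ge1$ (via pa13: $1\cdot d\le d\cdot d$, so $1\le d\le d^2\le0$). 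From that point pa9, pa12 and pa10 give the contradiction.

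Once $0\le a$ is known, $a\ne0\leftrightarrow 0<a$ is immediate. Item~3 then follows from pa11 in one direction. In the other, $a+1\le b$ gives $a\le b$ by $a\le a+1$ (from $0\le1$ and pa12), and $a\ne b$ by pa10. Finally, $a>0$ gives $1\le a$ by item~3 applied with $0$.
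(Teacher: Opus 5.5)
\textbf{The proof of $0\le a$ has a genuine gap, and it sits at the step you flagged.} The rest of your plan works: reflexivity, antisymmetry via pa11, pa10 and pa9, cancellation, $a\cdot0=0$, and items~3 and~4 once $0\le a$ is known.

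From $ad\le0$ and $0=ad+d^2$, adding $d^2$ by pa12 gives $ad+d^2\le d^2$, i.e.\ $0\le d^2$. That is the opposite of the inequality $d^2\le0$ you need.

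More care with cancellation will not fix this. Every fact you assemble holds in $\mathbb{Z}$ with $a=-1$, $d=1$: namely $a+1\le0$, $1\le d$, $0<d$, $ad\le0$, $0=ad+d^2$ and $d\le d^2$. Moreover, $\mathbb{Z}$ satisfies pa1--pa12, pa15, and every instance of pa13 with a nonnegative multiplier, which is the only kind you use. So nothing derived from these facts with those axioms and cancellation can give $d^2\le0$, because in $\mathbb{Z}$ we have $d^2=1$. Negative elements are excluded only by pa13 applied with the supposedly negative element itself as the multiplier, and your argument never does this.

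\textbf{The repair is short.} First get $0\le1$ directly. By pa1 and pa2, $0+1=1$, so pa10 at $x=0$ says $1\not\le0$, and pa8 gives $0\le1$. You need this anyway: your inference from $1\le d$ to $0<d$ silently uses $0\le d$, which inside the proof of $0\le a$ has no other justification.

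Then apply pa13 with $y=0$, $x=1$, $z=a$ to get $0\cdot a\le 1\cdot a$. By pa4, pa5 and $a\cdot0=0$, this is exactly $0\le a$, with no case analysis needed.

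If you prefer to keep your contradiction argument, apply pa13 to $0\le d$ with $z=a$ to obtain $0\le da=ad$. Then pa12 gives $d^2=0+d^2\le ad+d^2=0$. So $1\le d\le d^2\le0$, which contradicts pa10.
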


Moreover, by pa12 and the order cancellation property above, addition also
preserves strict inequalities. Thus, if $d>0$, then $a+d>a$.

\subsection{The two division principles}

Following Visser~\cite[Section~8.8]{visser}, define
\[
\operatorname{euc}(a,b)
\;:\Longleftrightarrow\;
\exists q\,\exists r\,(b=aq+r\land r<a),
\]
\[
\operatorname{cop}(a,b)
\;:\Longleftrightarrow\;
\exists c\,\exists d\,(ad=bc+1).
\]
The formula $\operatorname{euc}(a,b)$ expresses that $b$ can be divided by
$a$ with remainder. A pair satisfying $\operatorname{cop}(a,b)$ is called a
co-prime pair. Here $c$ and $d$ are also nonnegative. By
Lemma~\ref{lem:basic}, the remainder condition $r<a$ is equivalent to
$0\le r<a$.

The restricted Euclidean Division Principle is
\[
\mathrm{pa17}^{-}:\quad
\forall a\,\forall b\,
\bigl(\operatorname{cop}(a,b)\rightarrow\operatorname{euc}(a,b)\bigr),
\]
and the full Euclidean Division Principle is
\[
\mathrm{pa17}:\quad
\forall a\,\forall b\,
\bigl(a\ne0\rightarrow\operatorname{euc}(a,b)\bigr).
\]

\section{Quotient Preservation and Transfer to Co-prime Pairs}

\begin{thm}[A quotient-preserving division transformation]
\label{thm:affine-invariance}
\[
\mathrm{PA}^{-}\vdash
\forall x\,\forall y\,\forall s\,\forall t\,
\Bigl((x\ne0\land s<t)\rightarrow
\bigl(
\operatorname{euc}(x,y)
\leftrightarrow
\operatorname{euc}(tx,ty+s)
\bigr)\Bigr).
\]
\end{thm}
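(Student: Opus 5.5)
We prove the two implications separately, working in an arbitrary model of $\mathrm{PA}^{-}$ with $x\ne0$ and $s<t$. Note that $s<t$ together with Lemma~\ref{lem:basic} gives $t>0$, so $t\ge1$.

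\emph{Forward direction.} Suppose $y=xq+r$ with $r<x$. Multiplying by $t$ and distributing (pa5--pa7) gives
\[
ty+s=(tx)q+(tr+s).
\]
It remains to show $tr+s<tx$. From $r<x$ we get $r+1\le x$ by Lemma~\ref{lem:basic}(3), hence $tr+t\le tx$ by pa13 and pa7. From $s<t$ we get $tr+s<tr+t$, since addition preserves strict inequalities (the remark after Lemma~\ref{lem:basic}). Transitivity then gives $tr+s<tx$, so $\operatorname{euc}(tx,ty+s)$ holds with the same quotient $q$.

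\emph{Backward direction.} Suppose $ty+s=(tx)q+R$ with $R<tx$. We must extract a remainder for $y$ with quotient $q$, and here we cannot divide $R$ by $t$, since we have no division yet. Instead we compare $y$ with $xq$ directly, using linearity of $\le$ (pa8).

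\emph{Case $y<xq$.} Then $y+1\le xq$, so $ty+t\le txq\le txq+R=ty+s$. Cancelling $ty$ by Lemma~\ref{lem:basic}(1) gives $t\le s$, contradicting $s<t$. Hence $xq\le y$.

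\emph{Case $xq\le y$.} By pa15 write $y=xq+r$. Substituting gives $txq+tr+s=txq+R$, and cancellation yields $tr+s=R<tx$. We claim $r<x$. If not, then $x\le r$ by linearity, so $tx\le tr\le tr+s=R$ (pa13, and $0\le s$ with pa12). This contradicts $R<tx$. Therefore $r<x$ and $\operatorname{euc}(x,y)$ holds.

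The main obstacle is this backward direction, because without division we cannot write $R=tr+s$ directly. The trick is to define $r$ from $y-xq$ via pa15, which requires first ruling out $y<xq$. That step is where the hypothesis $s<t$ is genuinely used. All remaining steps are routine order manipulations from Lemma~\ref{lem:basic}, pa12, and pa13.
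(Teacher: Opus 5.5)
Your proof is correct and follows the paper's argument essentially step for step. In the forward direction you use the same estimate $tr+s<tr+t=t(r+1)\le tx$ with the quotient $q$ kept. In the backward direction you rule out $y<xq$ via $ty+t\le txq\le ty+s$, then use pa15 to obtain $r$, cancel to get $tr+s=R$, and reach the contradiction $tx\le tr\le R$ when $x\le r$.
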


\begin{proof}
Assume $x\ne0$ and $s<t$.

First suppose $\operatorname{euc}(x,y)$, and let
\[
y=xq+r,\qquad r<x.
\]
Since $s<t$ and, by pa11, $r+1\le x$, we have
\[
tr+s<tr+t=t(r+1)\le tx.
\]
Thus
\[
ty+s=(tx)q+(tr+s),\qquad tr+s<tx,
\]
and $\operatorname{euc}(tx,ty+s)$ holds. The quotient here is the same $q$
as in the original division.

Conversely, suppose $\operatorname{euc}(tx,ty+s)$, and let
\begin{equation}
ty+s=txq+u,\qquad u<tx.
\label{eq:affine-division}
\end{equation}
We first show that $xq\le y$. If $y<xq$, then pa11 gives
$y+1\le xq$. Multiplying this inequality by $t$ and using
equation~\eqref{eq:affine-division}, we obtain
\[
ty+t\le txq\le ty+s.
\]
It follows that $t\le s$, contradicting $s<t$. Hence $xq\le y$.

By pa15, there exists $r$ such that
\begin{equation}
y=xq+r.
\label{eq:affine-transfer}
\end{equation}
Suppose that $x\le r$. Equation~\eqref{eq:affine-transfer} gives
$ty+s=txq+tr+s$. Comparing this with
equation~\eqref{eq:affine-division}, we obtain
\[
tr+s=u.
\]
By pa13, $tx\le tr\le tr+s=u$, contradicting $u<tx$. Hence $r<x$, and
$\operatorname{euc}(x,y)$ holds.
\end{proof}

\begin{cor}[Transfer to a co-prime pair]\label{cor:square-transfer}
\[
\mathrm{PA}^{-}\vdash
\forall x\,\forall y\,
\Bigl((1<x\land0<y)\rightarrow
\Bigl(
\operatorname{cop}(x^2,xy+1)
\land
\bigl(
\operatorname{euc}(x,y)
\leftrightarrow
\operatorname{euc}(x^2,xy+1)
\bigr)
\Bigr)\Bigr).
\]
\end{cor}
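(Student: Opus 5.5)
The plan is to prove the two conjuncts separately, each essentially by one instantiation of the definitions and of Theorem~\ref{thm:affine-invariance}. Fix $x,y$ with $1<x$ and $0<y$.

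\emph{The equivalence.} Apply Theorem~\ref{thm:affine-invariance} with $t:=x$ and $s:=1$. Its hypotheses are $x\ne0$ and $1<x$. The inequality $1<x$ is given. Since $0\le 1<x$ (Lemma~\ref{lem:basic}(2)), transitivity gives $0<x$, hence $x\ne0$. The theorem then yields
\[
\operatorname{euc}(x,y)\leftrightarrow\operatorname{euc}(x\cdot x,\,x\cdot y+1),
\]
which is literally $\operatorname{euc}(x,y)\leftrightarrow\operatorname{euc}(x^2,xy+1)$ by the definition of $x^2$.

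\emph{Co-primality.} We must exhibit $c,d$ with $x^2d=(xy+1)c+1$. Following the introduction, the first step is to find $z$ with $z+1=xy$. Since $x>0$ and $y>0$, Lemma~\ref{lem:basic}(3) gives $1\le x$ and $1\le y$. By pa13 and pa4 (with commutativity), $1=1\cdot1\le x\cdot1=x\le xy$. Lemma~\ref{lem:basic}(4) (or pa15) then provides $z$ with $xy=1+z=z+1$.

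Now take $d:=y^2$ and $c:=z$, and check, using only the ring axioms pa1--pa7:
\[
(xy+1)z+1=(xy+1)z+1\cdot 1 .
\]
Observe that
\[
(xy+1)(z+1)=(xy+1)z+(xy+1)=(xy+1)z+1+xy .
\]
Also
\[
(xy+1)(z+1)=(xy+1)xy=(xy)^2+xy .
\]
Cancelling $xy$ by Lemma~\ref{lem:basic}(1) gives
\[
(xy+1)z+1=(xy)^2=x^2y^2 ,
\]
where the last equality uses associativity and commutativity of multiplication. Hence
\[
x^2\cdot y^2=(xy+1)\cdot z+1,
\]
which is $\operatorname{cop}(x^2,xy+1)$.

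\emph{Where care is needed.} The only delicate point is that there is no subtraction in $\mathrm{PA}^-$. So instead of writing $z=xy-1$ directly, the identity must be derived by multiplying out $(xy+1)(z+1)$ in two ways and then applying additive cancellation. I expect this bookkeeping, rather than any conceptual step, to be the main thing to get right. The hypothesis $0<y$ is used only to guarantee $xy\ge1$, so that $z$ exists.
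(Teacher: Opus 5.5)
Your proof is correct and follows the paper's route: you instantiate Theorem~\ref{thm:affine-invariance} with $t=x$, $s=1$, obtain $z$ with $z+1=xy$, and use the same witnesses $c=z$, $d=y^2$. The one difference is minor: the paper checks the identity by expanding $x^2y^2=(z+1)^2=(z+2)z+1$ directly in the semiring axioms, so the additive cancellation you describe as necessary can in fact be avoided.
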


\begin{proof}
The equivalence of the division statements follows from
Theorem~\ref{thm:affine-invariance} by taking $t=x$ and $s=1$.

Now assume $1<x$ and $0<y$. Then $1\le xy$, so by pa15 there exists $z$
such that
\[
z+1=xy.
\]
Therefore
\[
x^2y^2=(z+1)^2=(z+2)z+1=(xy+1)z+1.
\]
Thus $c=z$ and $d=y^2$ witness
$\operatorname{cop}(x^2,xy+1)$.
\end{proof}

\begin{cor}\label{cor:equivalence}
\[
\mathrm{PA}^{-}\vdash
\bigl(\mathrm{pa17}^{-}\leftrightarrow\mathrm{pa17}\bigr).
\]
\end{cor}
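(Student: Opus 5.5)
We prove the two implications separately. The direction $\mathrm{pa17}\rightarrow\mathrm{pa17}^{-}$ is routine: given $a,b$ with $\operatorname{cop}(a,b)$, pa17 gives $\operatorname{euc}(a,b)$ as soon as $a\ne0$. So it suffices to rule out $a=0$. If $a=0$, then a witness $ad=bc+1$ would give $0=bc+1$ by Lemma~\ref{lem:basic}(2). But $0\le bc$, so $1\le bc+1$ by pa12, and hence $1\le 0$. This contradicts $0<1$, which follows from pa10 together with pa1 and linearity.

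For $\mathrm{pa17}^{-}\rightarrow\mathrm{pa17}$, we fix $x\ne0$ and an arbitrary $y$, and we aim to prove $\operatorname{euc}(x,y)$. Corollary~\ref{cor:square-transfer} only covers the case $1<x$ and $0<y$, so we first dispose of the degenerate cases.

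If $y=0$, we take $q=0$ and $r=0$. Then $y=x\cdot0+0$ by Lemma~\ref{lem:basic}(2) and pa1, and $0<x$ holds since $x\ne0$.

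If $x\ne0$ but not $1<x$, then Lemma~\ref{lem:basic}(3) gives $1\le x$, and linearity forces $x=1$. In that case we take $q=y$ and $r=0$, using pa4.

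In the main case $1<x$ and $0<y$, Corollary~\ref{cor:square-transfer} yields $\operatorname{cop}(x^2,xy+1)$. Applying $\mathrm{pa17}^{-}$ to the pair $(x^2,xy+1)$ gives $\operatorname{euc}(x^2,xy+1)$. The biconditional in the same corollary then returns $\operatorname{euc}(x,y)$. This completes the proof of pa17.

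There is no real obstacle here, since the heavy lifting is already done by Theorem~\ref{thm:affine-invariance} and Corollary~\ref{cor:square-transfer}. The only point needing care is the case split. The hypotheses $1<x$ and $0<y$ of Corollary~\ref{cor:square-transfer} must be exhausted correctly. We must also check that the pair in the $\mathrm{pa17}^{-}$ direction has nonzero first coordinate, which the co-primality witness guarantees.
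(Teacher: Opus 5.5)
Your proof is correct and follows the paper's argument essentially verbatim: the same routine direction, the same case split ($y=0$, $x=1$, and $1<x\land0<y$), and the same appeal to Corollary~\ref{cor:square-transfer} followed by $\mathrm{pa17}^{-}$ in the main case. Your explicit derivation that a co-primality witness forces $a\ne0$ (via $1\le bc+1$ and pa10) fills in a step the paper simply asserts.
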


\begin{proof}
First assume $\mathrm{pa17}$. If $ad=bc+1$, then $a\ne0$, so
$\operatorname{cop}(a,b)$ implies $\operatorname{euc}(a,b)$, and hence
$\mathrm{pa17}^{-}$ follows.

Conversely, assume $\mathrm{pa17}^{-}$, and let $x\ne0$ and $y$ be
arbitrary. We show $\operatorname{euc}(x,y)$. If $y=0$, take both the
quotient and the remainder to be $0$. If $x=1$, take the quotient to be $y$
and the remainder to be $0$. If $1<x$ and $0<y$, then
Corollary~\ref{cor:square-transfer} gives
$\operatorname{cop}(x^2,xy+1)$. Hence $\mathrm{pa17}^{-}$ gives
$\operatorname{euc}(x^2,xy+1)$, and the equivalence in the same corollary
gives $\operatorname{euc}(x,y)$. Thus $\mathrm{pa17}$ holds.
\end{proof}

\begin{cor}\label{cor:extensions}
Let $T$ be any extension of $\mathrm{PA}^{-}$. Then
\[
T\vdash\bigl(\mathrm{pa17}^{-}\leftrightarrow\mathrm{pa17}\bigr).
\]
\end{cor}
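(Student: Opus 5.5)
This follows from Corollary~\ref{cor:equivalence} by monotonicity of derivability. Here ``extension of $\mathrm{PA}^{-}$'' means a theory $T$ in a language containing $\mathcal L_0$ such that every axiom of $\mathrm{PA}^{-}$ (pa1--pa13 and pa15) is a theorem of $T$.

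First, I will take the derivation of $\mathrm{pa17}^{-}\leftrightarrow\mathrm{pa17}$ from $\mathrm{PA}^{-}$ supplied by Corollary~\ref{cor:equivalence}. Its proof appeals only to Corollary~\ref{cor:square-transfer}, Theorem~\ref{thm:affine-invariance} and Lemma~\ref{lem:basic}, all of which were proved in $\mathrm{PA}^{-}$. So this is an honest first-order derivation whose nonlogical premises are finitely many instances of pa1--pa13 and pa15.

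Second, I will replace each such premise by a $T$-derivation of it. This is possible because $T$ proves every axiom of $\mathrm{PA}^{-}$. The result is a $T$-derivation of the same sentence. If the language of $T$ properly extends $\mathcal L_0$, nothing changes: the sentence and the formulas $\operatorname{euc}$ and $\operatorname{cop}$ are $\mathcal L_0$-formulas, and first-order derivations remain valid in an expanded language.

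Alternatively, I can argue semantically. Every model $M\models T$ has an $\mathcal L_0$-reduct that satisfies $\mathrm{PA}^{-}$. Hence it satisfies $\mathrm{pa17}^{-}\leftrightarrow\mathrm{pa17}$ by Corollary~\ref{cor:equivalence} and soundness. Since this is an $\mathcal L_0$-sentence, it also holds in $M$. Completeness then gives $T\vdash\mathrm{pa17}^{-}\leftrightarrow\mathrm{pa17}$.

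There is no real obstacle. The only point needing care is the convention on ``extension,'' namely that the defined symbols $<$, $2$ and $x^2$ keep their meaning in $T$. This holds because they are abbreviations in $\mathcal L_0$ rather than new primitives.
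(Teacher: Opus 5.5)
Your proposal is correct and matches the paper's proof, which also obtains the claim directly from Corollary~\ref{cor:equivalence} by monotonicity of provability under added assumptions. Your extra remarks on expanded languages and the semantic route via completeness are sound but go beyond what the paper states.
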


\begin{proof}
This follows from Corollary~\ref{cor:equivalence} and the monotonicity of
provability under the addition of assumptions.
\end{proof}

\section*{Acknowledgements}

The author thanks Albert Visser for helpful correspondence and for
encouraging the author to write up this result.

\section*{Use of Generative AI}

Prism's AI assistant was used to assist in checking calculations and in
translating the manuscript into English. The author reviewed the outputs and
takes responsibility for the final content.

\end{document}